\documentclass[dvipdfmx,10pt]{amsart}
\usepackage{url}
\usepackage[dvipdfmx,colorlinks=true]{hyperref}
\numberwithin{equation}{section}
\theoremstyle{definition}
\newtheorem{thm}{Theorem}[section]
\newtheorem{prop}[thm]{Proposition}

\newtheorem{lem}[thm]{Lemma}
\newtheorem{rem}[thm]{Remark}

\newtheorem*{ack}{Acknowledgments}
\newtheorem*{mt}{Main Theorem}

\def\im{\mathop{\mathrm{Im}}\nolimits}

\title[outer Galois points and automorphisms of order 4]
{Quartic surfaces with an outer Galois point and $K3$ surfaces with an automorphism of order 4}
\author[K.~Miura]{Kei Miura}
\address{Department of Mathematics, National Institute of Technology, Ube College, 
2-14-1 Tokiwadai, Ube, Yamaguchi 755-8555, Japan}
\email{kmiura@ube-k.ac.jp}
\author[S.~Taki]{Shingo Taki}
\address{Department of Mathematics, Tokai University,
4-1-1, Kitakaname, Hiratsuka, Kanagawa, 259-1292, Japan}
\email{staki@tokai.ac.jp}
\urladdr{https://taki.sm.u-tokai.ac.jp}
\date{\today}
\subjclass[2020]{Primary 14J70; Secondary 14J28, 14J50, 14N05}
\keywords{Galois point, automorphism, $K3$ surface}
\dedicatory{}
\thanks{}
\begin{document}

\begin{abstract}
We prove that there exists a one-to-one correspondence between
smooth quartic surfaces with an outer Galois point
and $K3$ surfaces with a certain automorphism of order 4.
Furthermore, we characterize 
quartic surfaces with two or more outer Galois points 
as $K3$ surfaces.
\end{abstract}

\maketitle


\section{Introduction}\label{Introduction}

We will work over $\mathbb{C}$, the field of complex numbers, throughout this paper.
Let $V$ be a smooth hypersurface in $\mathbb{P}^{N}$
and $\mathbb{C}(V)$ the function field of $V$.
For a point $P \in \mathbb{P}^{N}$, we consider a 
projection $\pi_{P}: V \dashrightarrow H$ where 
$H$ is a hyperplane in $\mathbb{P}^{N}$ which does not contain $P$.
Note that the projection $\pi_{P}$ induces the extension 
$\mathbb{C}(V)/\pi_{P}^{\ast}\mathbb{C}(H)$ of function fields.
If the extension is Galois and $P \not \in V (\text{respectively } P \in V)$
then the point $P$ is called an \textit{outer} (respectively \textit{inner}) \textit{Galois point} for $V$.

Galois points were pioneered by H.~ Yoshihara in 1996, and have been studied by many mathematicians.
Here is a fundamental problem about Galois points:
\begin{itemize}
\item Find the number of (inner or outer) Galois points of a given hypersurface.
\item Characterize hypersurfaces with Galois points.
\end{itemize}

Yoshihara \cite{Yoshihara-GK3} determined equations of quartic surfaces with inner Galois points,
and found the number of inner Galois points.
Recently, we obtained a characterization of quartic surfaces with an inner Galois point 
in terms of $K3$ surfaces with an automorphism.

\begin{thm}[\cite{Miura-Taki}]
\begin{enumerate}
\item There exists a one-to-one correspondence between
smooth quartic surfaces with one inner Galois point
and 
$K3$ surfaces with a (purely) non-symplectic automorphism of order 3 and type $(4, 3)$.

\item The smooth quartic surface with the maximum number ($=8$) of inner Galois points
 is the singular $K3$ surface whose transcendental lattice is 
given by $\begin{pmatrix}8 & 4\\ 4 & 8\end{pmatrix}$.
\end{enumerate}
\end{thm}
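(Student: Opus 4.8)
The plan is to prove (1) by setting up a dictionary between the geometry of the projection and the theory of non-symplectic automorphisms of $K3$ surfaces, and to prove (2) by a lattice computation that pins down the extremal surface.

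\emph{From quartics to $K3$ surfaces.} Let $V\subset\mathbb{P}^{3}$ be a smooth quartic with an inner Galois point $P\in V$. Since $V$ is smooth, a general line through $P$ meets $V$ in $P$ simply and in three further points, so $G_{P}$ is cyclic of order $3$; write $G_{P}=\langle\sigma\rangle$. Every smooth quartic surface is a $K3$ surface, hence minimal, and a minimal surface of Kodaira dimension $0$ has no birational self-maps other than automorphisms, so $\sigma$ acts biregularly on $V$. As $\mathbb{C}(V)^{G_{P}}=\pi_{P}^{\ast}\mathbb{C}(H)$ is a rational function field, $V/\langle\sigma\rangle$ is a rational surface; were $\sigma$ symplectic the minimal resolution of $V/\langle\sigma\rangle$ would be a $K3$ surface, which is impossible, so $\sigma$ is non-symplectic, $\sigma^{\ast}\omega_{V}=\zeta_{3}\omega_{V}$, and---$3$ being prime---purely non-symplectic. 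To read off the type, use Yoshihara's normal form: after a projective transformation $V=\{X_{0}^{3}X_{1}+G(X_{1},X_{2},X_{3})=0\}$ with $G$ a quartic form, $\sigma\colon[X_{0}:X_{1}:X_{2}:X_{3}]\mapsto[\zeta_{3}X_{0}:X_{1}:X_{2}:X_{3}]$ and $P=[1:0:0:0]$; then $\mathrm{Fix}(\sigma|_{V})$ consists of $P$ together with the smooth plane quartic $C=\{X_{0}=0\}\cap V$ (genus $3$, $C^{2}=4$), the tangent plane $T_{P}V=\{X_{1}=0\}$ cuts $V$ in four distinct lines $L_{1},\dots,L_{4}$ concurrent at $P$, each $L_{i}$ is $\sigma$-invariant, and the invariant lattice $S(\sigma)=\langle L_{1},\dots,L_{4}\rangle$ has rank $4$ with discriminant group $A_{S(\sigma)}\cong(\mathbb{Z}/3\mathbb{Z})^{3}$; this is precisely the datum ``order $3$, type $(4,3)$''.

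\emph{From $K3$ surfaces to quartics, and bijectivity.} Conversely, let $(X,\sigma)$ be a $K3$ surface with a purely non-symplectic $\sigma$ of order $3$ and type $(4,3)$. From the lattice $S(\sigma)$ (rank $4$, discriminant $(\mathbb{Z}/3\mathbb{Z})^{3}$) one extracts, after a choice of Weyl chamber, four smooth rational curves $\ell_{1},\dots,\ell_{4}$ with $\ell_{i}^{2}=-2$ and $\ell_{i}\cdot\ell_{j}=1$ ($i\neq j$) whose classes are $\sigma$-invariant; then $h:=\ell_{1}+\dots+\ell_{4}$ is a degree-$4$ polarization with $\dim|h|=3$. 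One shows---ruling out the hyperelliptic alternative (a double cover of a quadric surface) by using the $\sigma$-action and the smooth genus-$3$ fixed curve---that $\varphi_{|h|}\colon X\hookrightarrow\mathbb{P}^{3}$ embeds $X$ as a smooth quartic $V$, in which the $\ell_{i}$ become four lines meeting at a single point $P$, the unique isolated fixed point of $\sigma$; moreover $T_{P}V$ is the tangent plane cutting out these lines, and projection from $P$ presents $\mathbb{C}(V)/\mathbb{C}(H)$ as the cyclic cubic extension with group $\langle\sigma\rangle$, so $P$ is an inner Galois point. Finally, the two constructions are mutually inverse on isomorphism classes: an isomorphism of quartics sending one inner Galois point to another is a projective transformation, hence conjugates the $\sigma$'s, while an isomorphism of $K3$ pairs preserves the degree-$4$ polarization $\mathcal{O}_{X}(h)$ and is therefore induced by an element of $\mathrm{PGL}_{4}$.

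\emph{Part (2).} Each inner Galois point of $V$ produces a purely non-symplectic order-$3$ automorphism with $S(\sigma)\subseteq\mathrm{Pic}(V)$ of rank $4$, so more Galois points mean more such automorphisms and fewer moduli; the maximal configuration must therefore sit on a rigid $K3$ surface, i.e. a singular one with $\rho=20$. Imposing on the normal form the existence of a second, third, $\dots$ inner Galois point yields successively stronger constraints on $G$; tracking these down, the maximum is attained by a single highly symmetric quartic $V_{0}$, on which one exhibits $8$ inner Galois points explicitly and bounds the number of admissible order-$3$ automorphisms to show there are no more. Since each of the corresponding automorphisms acts on $T(V_{0})$ as an order-$3$ isometry with no nonzero fixed vector, $T(V_{0})$ is a rank-$2$ $\mathbb{Z}[\zeta_{3}]$-lattice, hence $\cong A_{2}(n)$ for some $n\geq 1$; comparing discriminants (and the period) forces $n=4$, i.e. $T(V_{0})=\begin{pmatrix}8&4\\4&8\end{pmatrix}$.

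\emph{Main obstacle.} The crux is the converse in part (1): reconstructing the quartic model from the abstract pair $(X,\sigma)$. One must (a) prove $\varphi_{|h|}$ is an embedding and not a double cover of a quadric, and (b) show that the four $(-2)$-curves forced by type $(4,3)$ meet at a common $\sigma$-fixed point and are cut out there by a tangent plane; both hinge on a careful analysis of $\mathrm{Fix}(\sigma)$ and of the $\sigma$-representation on $H^{0}(X,\mathcal{O}_{X}(h))$---in particular on excluding a $(2,2)$ eigenspace splitting, which would yield no Galois point. In part (2) the delicate step is the explicit identification of $V_{0}$ together with the verification that its transcendental lattice is exactly $\begin{pmatrix}8&4\\4&8\end{pmatrix}$ rather than some other $A_{2}(n)$.
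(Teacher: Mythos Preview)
The statement you attempt to prove is not proved in this paper at all: it is quoted in the Introduction as a result of the authors' earlier preprint \cite{Miura-Taki}, and no argument for it appears anywhere in the present text. There is therefore no proof here against which your proposal can be compared.

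For context, the present paper proves the \emph{outer} Galois analogue (the Main Theorem), and its arguments in Sections~3--5 follow the same two-step template your outline adopts for the inner case: in one direction, use Yoshihara's normal form to read off the fixed locus of the Galois automorphism and match it to a type in the Artebani--Sarti--Taki classification; in the other, use the fixed curve to produce a quartic polarization and invoke Saint-Donat's criterion to show the associated map is an embedding. In the outer case (Proposition~\ref{mt1-2}) the converse is short because the genus-$3$ fixed curve is already the hyperplane section; your inner-case converse instead builds $h$ from four $(-2)$-curves and must show they are concurrent at a $\sigma$-fixed point---you correctly flag this as the main obstacle but do not carry it out. Likewise your part~(2) remains a sketch: the extremal quartic $V_{0}$ is never named, the bound of $8$ inner Galois points is asserted rather than derived, and the identification of the transcendental lattice is left to a discriminant comparison whose details are suppressed.
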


Regarding outer Galois points, the case of quartic curves has been well 
studied by Miura and Yoshihara \cite{MY}.
However, there are few research results on outer Galois points in surfaces.
The following is the main theorem of this paper.

\begin{mt}
\begin{enumerate}
\item\label{mt1} There exists a one-to-one correspondence between
smooth quartic surfaces with one outer Galois point
and 
$K3$ surfaces with a purely non-symplectic automorphism of order 4 and type $(1, 0, 0, 3)$.

\item\label{mt2} The moduli space of smooth quartic surfaces with two outer Galois points is  
a subspace of codimension 1 in the moduli space of $K3$ surfaces with 
a non-purely non-symplectic automorphism of order 4 and type $(10, 4, 8)$.

\item\label{mt3} The smooth quartic surface with the maximum number ($=4$) of outer Galois points
 is the singular $K3$ surface whose transcendental lattice is 
given by $\begin{pmatrix}8 & 0 \\ 0 & 8\end{pmatrix}$.
\end{enumerate}
\end{mt}

\begin{ack}
This work was supported by JSPS KAKENHI Grant Number JP18K03230 and JP23K03036.
\end{ack}

\section{Preliminaries}\label{prelim}
In this section, we recall some basic results about 
smooth quartic surfaces with an outer Galois point, and
$K3$ surfaces with an automorphism of order 4.

\subsection{Quartic surfaces with an outer Galois point}
See \cite{Yoshihara--hyper} for details.
Let $P \in \mathbb{P}^{N}$ be a Galois point. 
Then projection $\pi_{P}: V \dashrightarrow H$ induces 
the Galois extension $\mathbb{C}(V)/\pi_{P}^{\ast}\mathbb{C}(H)$.
We denote by $G_{P}$ its Galois group.

\begin{prop}\label{equations}(\cite[Theorem 10, Corollary 6, Lemma 3]{Yoshihara--hyper}) 
Let $S$ be a smooth quartic surface in $\mathbb{P}^{3}$ with an outer Galois point.
\begin{itemize}
\item[(I)]
The homogeneous equation of $S$ is given by one of the followings (up to projective transformations): 
\begin{enumerate}
\item $X^{4}+F_{4}(Y, Z, W)=0$,
\item $X^{4}+Y^{4}+F_{4}(Z, W)=0$,
\item $X^{4}+Y^{4}+Z^{4}+W^{4}=0$, 
\end{enumerate}
where $F_{d}$ is a homogeneous polynomial of degree $d$ with distinct factors.
Moreover, the outer Galois points on each $S$ are
\begin{enumerate}
\item $[1:0:0:0]$,
\item $[1:0:0:0], [0:1:0:0]$,
\item $[1:0:0:0], [0:1:0:0], [0:0:1:0], [0:0:0:1]$, 
\end{enumerate}
respectively. 

\item[(II)] For outer Galois points $P_{1}=[1:0:0:0], P_{2}=[0:1:0:0], P_{3}=[0:0:1:0], P_{4}=[0:0:0:1]$,
each $G_{P_{k}}$ is the cyclic group of order 4.
Moreover if $\sigma_{k}$ is a generator of $G_{P_{k}}$ 
then $\sigma_{k}$ is given by one of the followings:
\begin{align*}
\sigma_{1} &: [X:Y:Z:W] \mapsto [\zeta_{4}X:Y:Z:W], \\
\sigma_{2} &: [X:Y:Z:W] \mapsto [X:\zeta_{4}Y:Z:W], \\
\sigma_{3} &: [X:Y:Z:W] \mapsto [X:Y:\zeta_{4}Z:W], \\
\sigma_{4} &: [X:Y:Z:W] \mapsto [X:Y:Z:\zeta_{4}W].
\end{align*}
\end{itemize}
where $\zeta_{4}$ is a primitive $4$-th root of unity. 
\end{prop}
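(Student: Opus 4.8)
The plan is to reduce the question entirely to the \emph{linear} automorphisms of $S$ and then to the classification of finite cyclic groups of projective transformations fixing a point of $\mathbb{P}^{3}$.

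\emph{Step 1: reduction to linear automorphisms.} Let $P$ be an outer Galois point of the smooth quartic $S$. Then $S$ is a $K3$ surface, hence minimal, so $\mathrm{Bir}(S)=\mathrm{Aut}(S)$, and $\pi_{P}\colon S\to\mathbb{P}^{2}$ is a finite morphism of degree $4$ which is Galois with group $G_{P}$ of order $4$. I would first note that $\pi_{P}^{\ast}\mathcal{O}_{\mathbb{P}^{2}}(1)=\mathcal{O}_{S}(1)$ and that each element of $G_{P}$ commutes with $\pi_{P}$, so $G_{P}$ preserves $\mathcal{O}_{S}(1)$; since $S\hookrightarrow\mathbb{P}^{3}$ is the embedding defined by the complete linear system $|\mathcal{O}_{S}(1)|$, every $g\in G_{P}$ is induced by a projective transformation of $\mathbb{P}^{3}$ fixing $P$. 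As $g$ also acts trivially on the target $\mathbb{P}^{2}$ of $\pi_{P}$, it fixes every line through $P$, so, taking $P=[1:0:0:0]$, it has the shape $[X:Y:Z:W]\mapsto[aX+\ell(Y,Z,W):Y:Z:W]$. Such transformations form the affine group $\mathbb{C}^{\ast}\ltimes\mathbb{C}^{3}$, whose finite subgroups are cyclic; hence $G_{P}=\langle\sigma\rangle\cong\mathbb{Z}/4\mathbb{Z}$, and after a coordinate change $X\mapsto X+(\text{linear form in }Y,Z,W)$ I may take $\sigma=\mathrm{diag}(\zeta_{4},1,1,1)$. The converse holds and is what I will use: if $\sigma\in\mathrm{Aut}(S)$ is linear of order $4$ with eigenvalues $\{\zeta_{4},1,1,1\}$ and its isolated fixed point $P$ lies off $S$, then $\sigma$ fixes every line through $P$, so $\langle\sigma\rangle$ acts by deck transformations of $\pi_{P}$; this group has order $4=\deg\pi_{P}$, so $\pi_{P}$ is Galois and $P$ is an outer Galois point.

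\emph{Step 2: one outer Galois point.} With $P_{1}=[1:0:0:0]$ and $\sigma_{1}=\mathrm{diag}(\zeta_{4},1,1,1)$, write the defining quartic as $f=\sum_{j=0}^{4}a_{j}(Y,Z,W)X^{j}$ with $\deg a_{j}=4-j$. From $\sigma_{1}^{\ast}f=\lambda f$ one gets $\zeta_{4}^{\,j}a_{j}=\lambda a_{j}$ for all $j$; as $P_{1}\notin S$ forces the constant $a_{4}$ to be nonzero, the only admissible value is $\lambda=1$, which kills $a_{1},a_{2},a_{3}$ and, after scaling, gives $f=X^{4}+F_{4}(Y,Z,W)$. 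A short computation with $\nabla f$ shows that $S$ is singular precisely when the plane quartic $\{F_{4}=0\}$ is singular, so smoothness forces $F_{4}$ to be squarefree: this is case (1), and by the converse in Step 1 the only outer Galois point of such an $S$ is $P_{1}$, unless there is another one.

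\emph{Step 3: further outer Galois points.} Suppose $S$, now in the form $X^{4}+F_{4}(Y,Z,W)=0$, has a second outer Galois point $P_{2}$ with generator $\sigma_{2}$, again conjugate to $\mathrm{diag}(\zeta_{4},1,1,1)$. Since automorphisms of $S$ permute outer Galois points and $\sigma_{1}$ fixes the plane $\{X=0\}$ pointwise, if $P_{2}\notin\{X=0\}$ then $P_{2},\sigma_{1}P_{2},\sigma_{1}^{2}P_{2},\sigma_{1}^{3}P_{2}$ would be four more outer Galois points; analysing these orbits, together with the analogous orbits under $\sigma_{2}$, forces (after a coordinate change preserving the form of Step 2, i.e.\ one acting on $(Y,Z,W)$ by $\mathrm{GL}_{3}(\mathbb{C})$) $P_{2}=[0:1:0:0]$ and $\sigma_{2}=\mathrm{diag}(1,\zeta_{4},1,1)$. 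Invariance of $f$ under $\sigma_{2}$ then gives $F_{4}(Y,Z,W)=Y^{4}+F_{4}(Z,W)$ with $F_{4}(Z,W)$ squarefree (case (2)). Iterating once more, a third outer Galois point forces, after rescaling, $X^{4}+Y^{4}+Z^{4}+W^{4}=0$; a direct check shows the Fermat quartic has no outer Galois point beyond the four coordinate points, so case (3) has exactly $4$. In each case the displayed $\sigma_{k}$ are automorphisms of $S$ whose isolated fixed points, off $S$, are the listed points, which gives part (II).

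\emph{Main obstacle.} The heart of the argument is Step 3: simultaneously normalizing a second (then a third) outer Galois point while staying inside the normal form already obtained — equivalently, controlling the finite subgroup of $\mathrm{PGL}_{4}(\mathbb{C})$ generated by the $\sigma_{k}$ and the orbits of the outer Galois points under it, which in particular must rule out configurations with five or more such points — together with the final verification that the Fermat quartic has exactly $4$ outer Galois points, i.e.\ that the only order-$4$ elements with eigenvalue pattern $\{\zeta_{4},1,1,1\}$ in the group $(\mathbb{Z}/4\mathbb{Z})^{3}\rtimes S_{4}$ of linear automorphisms of the Fermat quartic have their isolated fixed points at the coordinate points. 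Step 1 is standard but should be carried out carefully, since a general quartic $K3$ surface has non-linear automorphisms — what makes it work is that a Galois automorphism preserves the hyperplane class. Everything else is routine bookkeeping with homogeneous polynomials.
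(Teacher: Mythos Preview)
The paper does not prove this proposition at all: it is quoted as \cite[Theorem~10, Corollary~6, Lemma~3]{Yoshihara--hyper} and stated without argument, so there is no in-paper proof to compare your proposal against.

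On the substance of your sketch: Steps~1 and~2 are correct and standard, and they recover Yoshihara's Lemma~3 together with the one-point normal form. Step~3, as you yourself flag, is where the content lies, and the orbit argument as written is circular. Observing that $\sigma_{1}$ would carry a second outer Galois point $P_{2}\notin\{X=0\}$ to three further outer Galois points is not a contradiction unless you already know that there are at most four such points --- and that bound is part of what you are proving. (The orbit cannot have size~$2$, since $\sigma_{1}^{2}=\mathrm{diag}(-1,1,1,1)$ fixes only $P_{1}$ and $\{X=0\}$; so you really do land at five.) The non-circular route is to show directly that $G_{P_{1}}$ and $G_{P_{2}}$ commute in $\mathrm{PGL}_{4}(\mathbb{C})$, equivalently that the pointwise-fixed hyperplane of each $\sigma_{k}$ contains the other Galois point; once $\sigma_{1}$ and $\sigma_{2}$ are simultaneously diagonal, both the normal form (2) and the position of $P_{2}$ drop out, and iterating gives (3) together with the bound $\le 4$. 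With that replacement your outline goes through; without it, Step~3 does not close.
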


\begin{rem}
Yoshihara gives results on a general hypersurface in $\mathbb{P}^{N}$ of degree $d$.
The above results are for cases of $d=4$ and $N=3$.
\end{rem}

\subsection{$K3$ surfaces with an automorphism of order 4}
For the details about foundations of $K3$ surfaces, see \cite{Huybrechts, Kondo-book}, and so on.
Let $S$ be a $K3$ surface, 
$\omega _{S}$ a nowhere vanishing holomorphic 2-form on $S$,
and $\sigma$ an automorphism of finite order $I$ on $S$.
\begin{enumerate}
\item $\sigma$ is called \textit{symplectic} if it satisfies $\sigma^{\ast} \omega _{S}=\omega _{S}$.
\item  $\sigma$ is called \textit{purely non-symplectic}  if it satisfies 
$\sigma^{\ast} \omega _{S}=\zeta_{I}\omega _{S}$
where $\zeta_{I}$ is a primitive $I$-th root of unity. 
\end{enumerate}

The following are well-known. See also \cite[\S 5]{Ni} and \cite[Theorem 0.1]{AST}.

\begin{lem}\label{fixedlocus}
We consider the fixed locus $S^{\sigma}=\{x \in S \mid \sigma(x)=x\}$ of $\sigma$.
\begin{enumerate}
\item If $\sigma$ is symplectic then $S^{\sigma}$ consists of isolated points.
\item If $\sigma$ is purely non-symplectic then $S^{\sigma}$ is either empty
 or the disjoint union of non-singular curves and isolated points.
 In particular, if $\sigma$ is of order 2 then $S^{\sigma}$ does not have isolated points.
\item If $\sigma$ is non-purely non-symplectic then $S^{\sigma}$ consists of isolated points.
\end{enumerate}
\end{lem}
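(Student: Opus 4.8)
The plan is to reduce the entire statement to a local computation at each fixed point by means of the Cartan linearization lemma. Since $\sigma$ is an automorphism of finite order $I=\ord(\sigma)$, at each fixed point $x\in S^{\sigma}$ there are local holomorphic coordinates $(z_{1},z_{2})$ centered at $x$ in which $\sigma$ is linear, say $\sigma(z_{1},z_{2})=(\alpha_{1}z_{1},\alpha_{2}z_{2})$ with $\alpha_{1},\alpha_{2}$ roots of unity of order dividing $I$. From this I would extract two facts used repeatedly. First, locally $S^{\sigma}$ is the common zero locus of those coordinates $z_{i}$ with $\alpha_{i}\neq 1$, so $S^{\sigma}$ is a closed complex submanifold of $S$; on a surface its connected components are therefore isolated points or non-singular curves, pairwise disjoint (and $S^{\sigma}$ may be empty), which already gives the structural assertion of (2). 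Second, if $\alpha_{1}=\alpha_{2}=1$ at some fixed point then $d\sigma_{x}=\mathrm{id}$, so $\sigma=\mathrm{id}$ on a neighborhood of $x$ and hence on all of $S$ by the identity theorem (as $S$ is connected); thus, whenever $\sigma\neq\mathrm{id}$, at least one $\alpha_{j}$ differs from $1$ at every point of $S^{\sigma}$.

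Next I would tie the eigenvalues $\alpha_{j}$ to the action of $\sigma$ on $\omega_{S}$. Writing $\sigma^{\ast}\omega_{S}=\lambda\,\omega_{S}$ with $\lambda$ a root of unity of order dividing $I$ (since $\sigma^{I}=\mathrm{id}$), and using $\sigma^{\ast}(dz_{1}\wedge dz_{2})=\alpha_{1}\alpha_{2}\,dz_{1}\wedge dz_{2}$ together with $\omega_{S}=f\,dz_{1}\wedge dz_{2}$ locally with $f(x)\neq 0$, evaluation at $x$ yields $\alpha_{1}\alpha_{2}=\lambda$ at every $x\in S^{\sigma}$. The three cases are then bookkeeping with roots of unity. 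In case (1), $\sigma$ symplectic and $\neq\mathrm{id}$ means $\lambda=1$, so $\alpha_{1}\alpha_{2}=1$; since the two cannot both equal $1$, neither does, and $S^{\sigma}$ is locally just $\{x\}$, i.e. consists of isolated points. In case (2), $\sigma$ purely non-symplectic of order $I$ means $\lambda$ is a primitive $I$-th root of unity, so $\alpha_{1}=\alpha_{2}=1$ is impossible; hence at each fixed point either both $\alpha_{j}\neq 1$ (an isolated point) or exactly one, say $\alpha_{1}$, equals $1$ with $\alpha_{2}=\lambda\neq 1$ (a non-singular curve $\{z_{2}=0\}$), which is the asserted dichotomy; and when $I=2$ one has $\lambda=-1$ and $\alpha_{j}\in\{\pm1\}$, so $\alpha_{1}\alpha_{2}=-1$ forces exactly one $\alpha_{j}=1$ at every fixed point, leaving no isolated points. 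In case (3), with $\sigma$ non-purely non-symplectic, I would put $m=\ord(\lambda)$, so $1<m<I$, and consider $\rho=\sigma^{m}$: then $\rho^{\ast}\omega_{S}=\lambda^{m}\omega_{S}=\omega_{S}$, so $\rho$ is symplectic, and $\rho\neq\mathrm{id}$ because $\ord(\sigma)=I>m$; since $S^{\sigma}\subseteq S^{\rho}$ and $S^{\rho}$ consists of isolated points by case (1), so does $S^{\sigma}$.

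The only genuinely non-formal ingredient is the Cartan linearization lemma (one could instead average a Hermitian metric and use the exponential map, or invoke the holomorphic and topological Lefschetz fixed-point formulas, as in \cite{Ni,AST}); I do not anticipate a serious obstacle. The one point to watch is the orientation convention in the identity $\alpha_{1}\alpha_{2}=\lambda$ versus $\alpha_{1}^{-1}\alpha_{2}^{-1}=\lambda$, but since $\lambda$ and $\lambda^{-1}$ have the same order all three conclusions come out the same regardless.
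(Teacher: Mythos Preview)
Your argument is correct. The paper itself does not prove this lemma at all: it simply records the statement as ``well-known'' and points to \cite[\S 5]{Ni} and \cite[Theorem 0.1]{AST}. So there is no paper-proof to compare against; what you have written is a clean, self-contained local computation via Cartan linearization that the cited references also ultimately rest on. The reduction of case~(3) to case~(1) via the symplectic power $\sigma^{m}$ is a nice touch that avoids redoing the eigenvalue bookkeeping.
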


Let $S$ be a $K3$ surface with a non-symplectic automorphism $\sigma$ of order 4
and $r$ the rank of the invariant lattice 
$L(\sigma)=\{x \in H^{2}(S, \mathbb{Z}) \mid \sigma^{\ast}(x)=x\}$.
We remark that the fixed locus $S^{\sigma}$ consists of 
$n$ isolated points, $k$ smooth rational curves, and at most one smooth curve of genus $g$.
Moreover, we denote by $2a$ the number of smooth curves fixed by $\sigma^{2}$ and
interchanged by $\sigma$.

See \cite{AS} for more details on non-symplectic automorphisms of order 4.
The following which holds from \cite[Theorem 0.1 and Proposition 2]{AS} is important in this paper.

\begin{prop}\label{ASkaraWakaru}
The followings hold:
\begin{enumerate}
\item Let $\sigma$ be a purely non-symplectic automorphism of order 4.
If the fixed locus $S^{\sigma}$ contains a smooth curve of genus $g>1$ then
$(r, k, a, g)=(1, 0, 0, 3)$, $(4, 0, 0, 2)$, $(2, 0, 1, 3)$, $(5, 0, 1, 2)$ or $(6, 0, 2, 2)$.
Moreover the number of isolated fixed points $n$ is given by
$n=2\sum_{C \subset S^{\sigma}}(1-g(C))+4$.

\item Let $\sigma$ be a non-purely non-symplectic automorphism of order 4,
hence $\sigma$ satisfies $\sigma^{\ast}\omega_{S}=-\omega$.
Then $S^{\sigma}$ contains $n \leq 8$ isolated points and
$(r, l, n)=(6,8,0), (7,7,2), (8,6,4), (9,5,6)$ or $(10, 4, 8)$.
Here $l$ is the rank of the eigenspace of $\sigma^{\ast}$ in $H^{2}(S, \mathbb{C})$
relative to the eigenvalue $-1$.
\end{enumerate}
\end{prop}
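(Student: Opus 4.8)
The plan is to deduce Proposition~\ref{ASkaraWakaru} from the classification of non-symplectic order-$4$ automorphisms of $K3$ surfaces in \cite{AS}, specifically from \cite[Theorem~0.1]{AS} and \cite[Proposition~2]{AS}; the only real work is to isolate the relevant sub-cases and to check the numerology, so that is what I describe.

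\emph{Part~(1).} A purely non-symplectic $\sigma$ of order $4$ satisfies $\sigma^{\ast}\omega_{S}=\zeta_{4}\omega_{S}$, so $\sigma^{2}$ is a non-symplectic involution; by Lemma~\ref{fixedlocus}(2) the locus $S^{\sigma}$ is a disjoint union of smooth curves and isolated points, while $S^{\sigma^{2}}$ is a disjoint union of smooth curves only. First I would record the local behaviour at a fixed point $x\in S^{\sigma}$: in suitable coordinates $\sigma$ acts near $x$ by $(z_{1},z_{2})\mapsto(\zeta_{4}^{a}z_{1},\zeta_{4}^{b}z_{2})$ with $a+b\equiv 1\pmod{4}$, so the pair of eigenvalues is either $(1,\zeta_{4})$, in which case $x$ lies on a $\sigma$-fixed curve, or $(-1,-\zeta_{4})$, in which case $x$ is an isolated fixed point of $\sigma$ lying on a curve $C\subset S^{\sigma^{2}}$ on which $\sigma$ restricts to a genuine involution. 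Feeding this into the finite list of \cite[Theorem~0.1]{AS}, one reads off that $S^{\sigma}$ contains a curve of genus $g>1$ exactly in the five cases $(r,k,a,g)=(1,0,0,3),(4,0,0,2),(2,0,1,3),(5,0,1,2),(6,0,2,2)$, and that $k=0$ and $a\le 2$ in each of them. The identity $n=2\sum_{C\subset S^{\sigma}}(1-g(C))+4$ is \cite[Proposition~2]{AS}: it follows by comparing the topological Lefschetz fixed-point formulas for $\sigma$ and for $\sigma^{2}$ with Riemann--Hurwitz applied to $\sigma|_{C}$ on each curve $C$ fixed by $\sigma^{2}$, after splitting off the contribution of the $2a$ curves interchanged by $\sigma$; since $k=0$ the sum reduces to the single term $1-g$, i.e.\ $n=6-2g$.

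\emph{Part~(2).} If $\sigma$ has order $4$ and is neither symplectic nor purely non-symplectic, then the scalar by which $\sigma^{\ast}$ acts on $\omega_{S}$ can only be $-1$, since the scalars $\pm\zeta_{4}$ give precisely the purely non-symplectic case; thus $\sigma^{\ast}\omega_{S}=-\omega_{S}$ and $\sigma^{2}$ is a non-trivial symplectic involution, so $S^{\sigma}$ consists of $n$ isolated points by Lemma~\ref{fixedlocus}(3). Decompose $H^{2}(S,\mathbb{C})$ into the $\sigma^{\ast}$-eigenspaces for $1,-1,\zeta_{4},-\zeta_{4}$, of ranks $r,l,m,m$. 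Rationally, the span of the two eigenspaces for $\pm\zeta_{4}$ is the orthogonal complement of the invariant lattice $L(\sigma^{2})$ in $H^{2}(S,\mathbb{Z})$; since $\sigma^{2}$ is a symplectic involution this complement has rank $8$, whence $2m=8$, i.e.\ $r+l=14$. The topological Lefschetz fixed-point formula for $\sigma$ then gives $n=\chi(S^{\sigma})=2+\mathrm{tr}(\sigma^{\ast}|_{H^{2}})=2+(r-l)=2r-12$. Finally $S^{\sigma}\subseteq S^{\sigma^{2}}$ and a symplectic involution has exactly $8$ fixed points, so $0\le n\le 8$; together with $n=2r-12$ this forces $r\in\{6,7,8,9,10\}$, and hence $(r,l,n)=(6,8,0),(7,7,2),(8,6,4),(9,5,6)$ or $(10,4,8)$. (The holomorphic Lefschetz formula, the remaining ingredient of \cite[Proposition~2]{AS}, only adds that $n$ is even, which is automatic here.)

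The main obstacle is bookkeeping rather than geometry: one has to match the normalisation of the invariants $(r,n,k,a,g)$ in~(1) and $(r,l,n)$ in~(2) to the conventions of \cite{AS}, and to keep careful track of signs and of which eigenvalues each isolated fixed point contributes when invoking the Lefschetz formulas. Beyond \cite[Theorem~0.1, Proposition~2]{AS} and the standard fact that a symplectic involution on a $K3$ surface has $8$ fixed points (equivalently, coinvariant lattice of rank $8$), no new geometric input is required.
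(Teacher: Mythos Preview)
Your proposal is correct and in fact supplies considerably more detail than the paper itself: the paper does not give a proof of Proposition~\ref{ASkaraWakaru} at all, but simply records it as a consequence of \cite[Theorem~0.1 and Proposition~2]{AS}. Your argument is exactly the kind of unpacking of that citation one would expect, and the numerology you run through (in particular the derivation $r+l=14$, $n=2r-12$ in part~(2) from the fact that a symplectic involution has coinvariant lattice of rank~$8$) is sound.
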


\section{The case of one Galois point}
In this section, we discuss Main Theorem (\ref{mt1}).
The proof is given by Proposition \ref{mt1-1} and Proposition \ref{mt1-2}.

\begin{prop}\label{mt1-1}
Let $S$ be a smooth quartic surface with an outer Galois point $P$.
Then the generator of $G_{P}$ is a purely non-symplectic automorphism of order 4 and type $(1, 0, 0, 3)$.
\end{prop}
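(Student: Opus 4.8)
By Proposition~\ref{equations}, after a projective transformation we may take $P = [1:0:0:0]$ and write the quartic as $S: X^4 + F_4(Y,Z,W) = 0$, with the generator of $G_P$ being $\sigma_1 : [X:Y:Z:W] \mapsto [\zeta_4 X : Y : Z : W]$. Since $S$ is a smooth quartic surface in $\mathbb{P}^3$, it is a $K3$ surface, and $\sigma_1$ restricts to an order-$4$ automorphism of $S$; the plan is to verify that this automorphism is purely non-symplectic and to identify its type as $(1,0,0,3)$.

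First I would compute the action of $\sigma_1$ on the holomorphic $2$-form. One concrete way: realize $\omega_S$ via the residue (adjunction / Griffiths) formula from the quartic equation $f = X^4 + F_4(Y,Z,W)$ in $\mathbb{P}^3$, so that in the affine chart $X = 1$ one has $\omega_S = \mathrm{Res}\,\frac{dY\wedge dZ\wedge dW}{f}$ up to scalar, and track how $\sigma_1$ (which in this chart sends $(Y,Z,W) \mapsto \zeta_4^{-1}(Y,Z,W)$ after rescaling, or equivalently one works homogeneously) scales numerator and denominator. The net effect is that $\sigma_1^*\omega_S = \zeta_4^{\,e}\omega_S$ for some $e$; the computation should give a primitive $4$th root of unity (not $\pm 1$), so $\sigma_1$ is purely non-symplectic. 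This is the one genuinely computational point and the step where one must be careful with signs and with the choice of $\zeta_4$ versus $\zeta_4^{-1}$.

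Next I would pin down the fixed locus $S^{\sigma_1}$. The fixed points of $\sigma_1$ on $\mathbb{P}^3$ form the disjoint union of the point $[1:0:0:0]$ and the plane $\{X = 0\}$. Since $P \notin S$, the isolated fixed point contributes nothing, so $S^{\sigma_1} = S \cap \{X = 0\} = \{X = 0,\ F_4(Y,Z,W) = 0\}$, which is a smooth plane quartic curve (smoothness follows because $F_4$ has distinct factors and $S$ is smooth), hence an irreducible smooth curve of genus $g = 3$. Thus $S^{\sigma_1}$ is a single genus-$3$ curve: no isolated points coming from elsewhere, no extra rational curves, and $\sigma_1^2 : [X:Y:Z:W]\mapsto[-X:Y:Z:W]$ fixes this same curve and has no curve that is merely $\sigma_1^2$-fixed and $\sigma_1$-swapped, so $k = 0$ and $a = 0$.

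Finally I would invoke Proposition~\ref{ASkaraWakaru}(1): for a purely non-symplectic automorphism of order $4$ whose fixed locus contains a smooth curve of genus $g > 1$, the quadruple $(r,k,a,g)$ is one of the five listed options, and the only one with $g = 3$, $k = 0$, $a = 0$ is $(1,0,0,3)$. (As a consistency check, the formula $n = 2\sum_{C\subset S^{\sigma}}(1 - g(C)) + 4 = 2(1-3)+4 = 0$ correctly predicts no isolated fixed points, matching $P\notin S$.) This identifies the invariant-lattice rank as $r = 1$ and shows the type is exactly $(1,0,0,3)$, completing the proof. The main obstacle is purely bookkeeping: getting the eigenvalue of $\sigma_1^*$ on $\omega_S$ right so as to confirm pure non-symplecticity, after which everything else follows from the geometry of the fixed plane and the classification in Proposition~\ref{ASkaraWakaru}.
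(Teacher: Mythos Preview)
Your proof is correct and follows essentially the same route as the paper: normalize via Proposition~\ref{equations}, compute $S^{\sigma_1}$ and $S^{\sigma_1^2}$ to be the same smooth plane quartic of genus~$3$, and read off the type $(1,0,0,3)$ from Proposition~\ref{ASkaraWakaru}(1). The one difference is that you propose to verify pure non-symplecticity by a direct residue computation of $\sigma_1^*\omega_S$, whereas the paper bypasses this entirely: once you know $S^{\sigma_1}$ contains a curve, Lemma~\ref{fixedlocus} immediately rules out the symplectic and non-purely non-symplectic cases (both of which have only isolated fixed points), so no computation on the $2$-form is needed.
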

\begin{proof}
We may assume that $S$ is given by $X^{4}+F_{4}(Y, Z, W)=0$ and 
the generator of $G_{P}$ is given by $\sigma_{1}$ by Proposition \ref{equations}.
Then we have
\begin{align*}
S^{\sigma_{1}}&=S \cap (\{X=0\}\amalg \{Y=Z=W=0\})\\
&=\{F_{4}(Y, Z, W)=0\}\\
&=C^{(3)},
\end{align*}
and 
\begin{align*}
S^{\sigma_{1}^{2}}&=S \cap (\{X=0\}\amalg \{Y=Z=W=0\})\\
&=\{F_{4}(Y, Z, W)=0\}\\
&=C^{(3)},
\end{align*}
where $C^{(g)}$ is a smooth curve of genus $g$.
This implies that $\sigma_{1}$ is of type $(1, 0, 0, 3)$ by Proposition \ref{ASkaraWakaru} (1).
\end{proof}

\begin{prop}\label{mt1-2}
Let $S$ be a $K3$ surface and $\sigma$ a purely non-symplectic automorphism of order 4 and type $(1, 0, 0, 3)$.
Then a pair $(S, \langle \sigma \rangle)$ 
gives a smooth quartic surface with an outer Galois point.
\end{prop}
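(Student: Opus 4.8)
The plan is to reverse-engineer the geometric picture from Proposition~\ref{mt1-1}: a type $(1,0,0,3)$ purely non-symplectic automorphism $\sigma$ of order $4$ has invariant lattice of rank $1$, and its fixed locus is a single smooth curve of genus $3$ together with $n=2(1-3)+4=0$ isolated points, so $S^\sigma = C^{(3)}$ is a genus-$3$ curve with no isolated fixed points, and the same holds for $S^{\sigma^2}$ since $k=a=0$. I would first pin down the quotient $S/\langle\sigma\rangle$. Because $\sigma$ acts on $H^0(S,\omega_S)$ by a primitive $4$th root of unity, the minimal resolution of $S/\langle\sigma\rangle$ is a rational surface; more precisely, I expect $S/\langle\sigma^2\rangle$ to be (a blow-down of) a rational surface and the induced order-$2$ quotient to land on $\mathbb{P}^2$ or a Hirzebruch surface. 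The cleanest route is to exhibit an ample class: since $r=1$, the invariant lattice $L(\sigma)$ is generated by a single class $h$ with $h^2>0$; I would show $h^2=4$ using the holomorphic Lefschetz/topological Lefschetz formulas applied to $\sigma$ and $\sigma^2$ together with the known fixed-locus data $(n,k,a,g)=(0,0,0,3)$, so that $h$ (or $-h$) is the class of a $\sigma$-invariant ample divisor giving a degree-$4$ map $\varphi_h\colon S\to\mathbb{P}^3$.

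Next I would analyze $\varphi_h$. Since $\langle\sigma\rangle$ preserves $h$, it acts on $H^0(S,\mathcal{O}_S(h))\cong\mathbb{C}^4$, hence on the target $\mathbb{P}^3$, linearly; and because $\sigma$ has order $4$ acting faithfully, one eigenvalue-decomposition of the $4$-dimensional representation must occur. I would argue that $\varphi_h$ is an embedding — the Galois/invariant structure forbids the hyperelliptic-type degeneration — so $S'=\varphi_h(S)$ is a smooth quartic surface in $\mathbb{P}^3$ and $\sigma$ descends to a projective automorphism $\bar\sigma$ of order $4$ of $\mathbb{P}^3$. Diagonalizing $\bar\sigma$, the fixed-point data forces the eigenvalues: the fixed locus of $\bar\sigma$ on $S'$ being a single genus-$3$ curve and no points means $\bar\sigma$ must act as $[X:Y:Z:W]\mapsto[\zeta_4 X:Y:Z:W]$ in suitable coordinates (a reflection-type action with a $3$-dimensional eigenspace), because any other eigenvalue pattern would produce extra isolated fixed points on $S'$ by a local linearization count. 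Then the $\bar\sigma$-invariance of the quartic equation defining $S'$ forces it into the shape $X^4+F_4(Y,Z,W)=0$, and smoothness forces $F_4$ to have distinct factors. By Proposition~\ref{equations} (read in reverse) this equation defines a smooth quartic with an outer Galois point at $[1:0:0:0]$, whose associated Galois group is generated by exactly this $\bar\sigma$; transporting back, $(S,\langle\sigma\rangle)$ is isomorphic to that quartic with its Galois point, as desired.

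I would organize the write-up as: (i) record the fixed-locus invariants from Proposition~\ref{ASkaraWakaru}(1) and Proposition~\ref{fixedlocus}; (ii) produce the invariant ample class $h$ with $h^2=4$ and the linear $\langle\sigma\rangle$-action on $|h|\cong\mathbb{P}^3$; (iii) show $\varphi_h$ is an embedding onto a smooth quartic $S'$ and $\sigma\mapsto\bar\sigma\in\mathrm{PGL}_4$; (iv) diagonalize $\bar\sigma$ and use the fixed-locus data to normalize it to the $\sigma_1$-form, hence put the quartic equation into form~(1) of Proposition~\ref{equations}; (v) conclude via Proposition~\ref{equations} that $[1:0:0:0]$ is an outer Galois point with Galois group $\langle\bar\sigma\rangle$.

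The main obstacle will be step (iii): ruling out the degenerate cases where $\varphi_h$ is not an embedding — i.e.\ where $h$ is hyperelliptic, or $|h|$ has base points, or $\varphi_h$ is $2$-to-$1$ onto a quadric. The standard classification of linear systems on $K3$ surfaces (Saint-Donat) reduces this to excluding a handful of configurations: an elliptic pencil $E$ with $E\cdot h=2$, or a $(-2)$-curve, compatible with $r=1$ and with the order-$4$ action. Since $L(\sigma)$ has rank $1$, any such exceptional curve class would have to be non-invariant and come in a $\sigma$-orbit, and a short intersection-number bookkeeping (using $h^2=4$, $h\cdot(\text{orbit sum})$, and the Hodge index theorem) should eliminate each possibility; alternatively one can invoke that the minimal resolution of $S/\langle\sigma^2\rangle$ is rational to directly identify the quotient map with a projection, bypassing part of this case analysis. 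A secondary subtlety is step (iv): confirming that no eigenvalue configuration of $\bar\sigma$ other than the $(\zeta_4,1,1,1)$-type is consistent with ``one genus-$3$ curve, zero isolated points'' — this is a finite check via local linearization at fixed points, since each isolated fixed point of $\bar\sigma$ on $S'$ contributes according to the pair of tangent weights, and only the reflection-type action yields a whole fixed curve with no stray points.
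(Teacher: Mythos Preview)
Your outline matches the paper's strategy: produce a $\sigma$-invariant polarization of degree $4$, show via Saint-Donat that it gives an embedding into $\mathbb{P}^3$, linearize $\sigma$ on the target, and read off the normal form of Proposition~\ref{equations}. Two tactical choices differ from the paper and are worth noting.

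First, for step (ii) the paper does not compute $h^2$ via Lefschetz formulas; it simply takes $h=[C^{(3)}]$, the class of the fixed curve itself, and reads $h^2=2g-2=4$ from adjunction on a $K3$. Since $L(\sigma)$ has rank $1$, this class already generates, so your detour through Lefschetz numbers is unnecessary.

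Second, and more interestingly, for step (iii) the paper's argument to exclude an elliptic curve $E$ with $E\cdot C^{(3)}=2$ is a one-line application of Riemann--Hurwitz rather than orbit bookkeeping: since $C^{(3)}=S^{\sigma^2}$, the involution $\sigma^2$ restricts to $E$ and the double cover $E\to E/\sigma^2$ is ramified exactly along $E\cap C^{(3)}$; Hurwitz then gives $0=2\bigl(2g(E/\sigma^2)-2\bigr)+m$, forcing $m\in\{0,4\}$. This is cleaner than the Hodge-index/orbit-sum analysis you sketch (which does work, but requires case-splitting on the orbit size of $[E]$ under $\sigma^\ast$). One small point you would need either way, and which the paper leaves implicit, is that the relevant $E$ may be taken $\sigma^2$-invariant: since $\sigma$ preserves $|C^{(3)}|$, it acts on the image quadric and $\sigma^2$ preserves each ruling, hence the associated elliptic pencil.

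Steps (iv)--(v) in your plan agree with the paper's conclusion: once $\phi$ is an embedding, $\sigma$ descends to a projective transformation fixing the hyperplane $H$ with $\phi^{-1}(H)=C^{(3)}$, and after choosing $H=\{X=0\}$ the $\tilde\sigma$-invariance of the quartic forces the shape $X^4+F_4(Y,Z,W)=0$.
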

\begin{proof}
Note that $\sigma^{2}$ is of order 2 and 
$S^{\sigma^{2}}$ consists of $C^{(3)}$ by Proposition \ref{ASkaraWakaru} (1)
and Lemma \ref{fixedlocus} (2).
Then there exists the rational map $\phi: S \dashrightarrow \mathbb{P}^{3}$ 
associated to the linear system $|C^{(3)}|$.
For $\phi$ to be an embedding, it is sufficient that
the intersection number of $C^{(3)}$ and any elliptic curve on $S$
is not 2 by \cite[Theorem 5.2 and Theorem 6.1]{SD}.

Let $E$ be an elliptic curve on $S$ and $m$ the intersection number of $C^{(3)}$ and $E$.
We apply the Hurwitz formula for a morphism $E\to E/\sigma^{2}$ of degree 2.
Since the ramification locus is the intersections of $C^{(3)}$ and $E$, 
we have
\[2\cdot g(E)-2=2(2\cdot g(E/\sigma^{2})-2)+m.\]
This implies that $m$ is 0 or 4.
Thus $\phi$ is an embedding.

Since $C^{(3)}$ is a fixed cuve of $\sigma$, it preserves $\phi$.
Thus $\sigma$ induces a projective transformation $\tilde{\sigma}$
which fixes the hyperplane $H$ such that $\phi^{-1}(H)=C^{(3)}$.
By replacing coordinates of $\mathbb{P}^{3}$ such that $H=\{X=0\}$, 
we may assume that 
$\tilde{\sigma}$ satisfies $\tilde{\sigma}([X:Y:Z:W])=[\zeta_{4}X:Y:Z:W]$.
Since $\im \phi$ is invariant for the action of $\tilde{\sigma}$,
we find that its equation is of the form in Proposition \ref{equations} (I)-(1).
\end{proof}

\begin{rem}
(1) The embedding $\phi$ is called a Galois embedding \cite{Yoshihara-embedding}. 

(2) We remark that the invariant lattice 
$L(\sigma^{2})=\{x \in H^{2}(S, \mathbb{Z}) \mid (\sigma^{2})^{\ast}(x)=x\}$
is isomorphic to $U(2)\oplus A_{1}^{\oplus 4}$ by \cite[Theorem 4.2.2]{Ni3}.
Let $\{e_{1}, e_{2}\}$ be a basis of $U(2)$ with
$\langle e_{1}, e_{1} \rangle =0, \langle e_{2}, e_{2} \rangle =0, \langle e_{1}, e_{2} \rangle =2$.
Then we may assume that  $e_{1}$ defines an elliptic fibration $\pi :S \to \mathbb{P}^{1}$.

But the rank of the invariant lattice $L(\sigma)$ is 1.
Thus, $\sigma$ replaces $\pi$ with another elliptic fibration, and 
it does not induce an automorphism of order 4 on an elliptic curve
which is a generic fiber of $\pi$.
\end{rem}

\section{The case of two Galois points}
In this section, we study smooth quartic surfaces with two outer Galois points.
Hence we treat Main Theorem (\ref{mt2}).

\begin{prop}\label{OGalois2ko}
Let $S$ be a smooth quartic surface with two outer Galois points $P_{1}$ and $P_{2}$.
Then there exists a non-purely non-symplectic automorphism $\sigma$  of order 4 and type $(10, 4, 8)$.
\end{prop}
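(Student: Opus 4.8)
By Proposition \ref{equations}, a smooth quartic $S$ with two outer Galois points may be taken in the normal form $X^{4}+Y^{4}+F_{4}(Z,W)=0$, with the two Galois points $P_{1}=[1:0:0:0]$, $P_{2}=[0:1:0:0]$ and generators $\sigma_{1}\colon[X:Y:Z:W]\mapsto[\zeta_{4}X:Y:Z:W]$, $\sigma_{2}\colon[X:Y:Z:W]\mapsto[X:\zeta_{4}Y:Z:W]$. The natural candidate for the automorphism of order 4 in the statement is the composition $\sigma:=\sigma_{1}\circ\sigma_{2}^{-1}\colon[X:Y:Z:W]\mapsto[\zeta_{4}X:\zeta_{4}^{-1}Y:Z:W]$, equivalently (rescaling) $[X:Y:Z:W]\mapsto[\zeta_{4}X:-\zeta_{4}Y:Z:W]$ — something whose square acts by $-1$ on the two coordinates $X,Y$ and trivially on $Z,W$. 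First I would check that $\sigma$ indeed preserves $S$ (it does, since the equation is a sum of a $\zeta_{4}$-weighted part in $X,Y$ and an invariant part), so it is an automorphism of the $K3$ surface $S$, and compute its action on the holomorphic 2-form. Since the standard residue form for a quartic is $\omega_{S}=\mathrm{Res}\,\frac{\Omega}{X^{4}+Y^{4}+F_{4}(Z,W)}$ with $\Omega=X\,dY\wedge dZ\wedge dW-\cdots$, the weight of $\omega_{S}$ under $[X:Y:Z:W]\mapsto[\zeta_{4}X:\zeta_{4}^{-1}Y:Z:W]$ is $\zeta_{4}\cdot\zeta_{4}^{-1}=1$ on one choice of $\Omega$-monomial but I need to be careful: $\Omega$ transforms with weight equal to the sum of all four weights, i.e. $\zeta_{4}\cdot\zeta_{4}^{-1}\cdot1\cdot1=1$, while the denominator has weight $1$, so $\sigma^{\ast}\omega_{S}=\omega_{S}$ would make $\sigma$ symplectic, which is wrong — so the correct generator is the one with action $[\zeta_{4}X:\zeta_{4}^{\pm1}Y:Z:W]$ chosen so that $\Omega$ picks up a genuine $-1$. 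Concretely one wants $\sigma^{2}\colon[X:Y:Z:W]\mapsto[-X:-Y:Z:W]$, which has $(\sigma^{2})^{\ast}\Omega=(-1)(-1)(1)(1)\Omega=\Omega$, denominator weight $1$, hence $(\sigma^{2})^{\ast}\omega_{S}=\omega_{S}$: that is symplectic too, bad. The resolution is that the order-4 automorphism detecting two Galois points is not a product of the two $\sigma_{k}$ but rather $\sigma_{1}$ itself viewed together with the extra symmetry; I would instead take $\sigma=\sigma_{1}$ and use the second Galois point to constrain the geometry — but then $\sigma$ is purely non-symplectic of type $(1,0,0,3)$ by Proposition \ref{mt1-1}, not of type $(10,4,8)$. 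So the operative automorphism must genuinely mix $X$ and $Y$: take $\tau\colon[X:Y:Z:W]\mapsto[Y:X:Z:W]$ (the swap, which preserves $S$ since the equation is symmetric in $X^{4},Y^{4}$) and set $\sigma:=\sigma_{1}\circ\tau\colon[X:Y:Z:W]\mapsto[\zeta_{4}Y:X:Z:W]$. Then $\sigma^{2}\colon[X:Y:Z:W]\mapsto[\zeta_{4}X:\zeta_{4}Y:Z:W]=[X:Y:\zeta_{4}^{-1}Z:\zeta_{4}^{-1}W]$ after rescaling, which has $\Omega$-weight $\zeta_{4}^{-1}\cdot\zeta_{4}^{-1}=-1$ relative to a denominator of weight... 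I need to recompute, but the point is that $\sigma^{2}$ acts by $\zeta_{4}$-rescaling on two of the four coordinates in a way that gives $(\sigma^{2})^{\ast}\omega_{S}=-\omega_{S}$, so $\sigma^{2}$ is a non-symplectic involution and $\sigma$ is a non-purely non-symplectic automorphism of order 4 (i.e. $\sigma^{\ast}\omega_{S}=\pm i\cdot$ fails, instead $\sigma^{\ast}\omega_{S}=\pm\omega_{S}$... no — non-purely non-symplectic of order 4 means $\sigma^{\ast}\omega_{S}=-\omega_{S}$, so $\sigma^{2}$ is symplectic; I have it backwards again).

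Let me restructure. The correct statement to target: $\sigma$ has order 4, $\sigma^{\ast}\omega_{S}=-\omega_{S}$ (hence "non-purely non-symplectic"), $\sigma^{2}$ is a symplectic involution, and the invariant lattice data is $(r,l,n)=(10,4,8)$, which by Proposition \ref{ASkaraWakaru}(2) is the extremal case with $n=8$ isolated fixed points and no fixed curves. So the plan is: (1) produce $\sigma$ as an explicit automorphism of the normal-form quartic mixing the two Galois directions in such a way that $\sigma^{\ast}\omega_{S}=-\omega_{S}$; the natural candidate is $\sigma\colon[X:Y:Z:W]\mapsto[X:-Y:\zeta_{4}Z:W]$ or a conjugate thereof, chosen after a short weight bookkeeping so that exactly one factor of $-1$ survives on the residue form — I would do this computation carefully using $\omega_{S}=\mathrm{Res}(\Omega/Q)$ with $Q=X^{4}+Y^{4}+F_{4}(Z,W)$ and the requirement that $\sigma^{\ast}Q=Q$ up to scalar; (2) verify $\sigma^{2}$ is symplectic by the same weight computation, so $\sigma$ is non-purely non-symplectic of order 4; (3) compute $S^{\sigma}$ explicitly as the intersection of $S$ with the fixed loci of the linear action $\tilde\sigma$ on $\mathbb{P}^{3}$ — a union of coordinate linear subspaces — and check that it consists of exactly 8 isolated points (using that $F_{4}$ has distinct roots, so the fixed-line-meets-$S$ locus is 4 reduced points, doubled appropriately), and in particular contains no curves; (4) invoke Proposition \ref{ASkaraWakaru}(2): since $S^{\sigma}$ has $n=8$ isolated points, necessarily $(r,l,n)=(10,4,8)$, which is exactly type $(10,4,8)$.

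The main obstacle is step (1)–(2): getting the \emph{right} order-4 automorphism and correctly tracking how it acts on $\omega_{S}$. The two Galois involutions-of-order-4 $\sigma_{1},\sigma_{2}$ individually are purely non-symplectic of type $(1,0,0,3)$ (Proposition \ref{mt1-1}), and various products of them are either symplectic or purely non-symplectic again; the non-purely non-symplectic order-4 automorphism is a more subtle combination — morally it is "$\sigma_{1}$ composed with the hyperelliptic-type swap exchanging the two Galois points", and one must check both that this composition genuinely has order 4 on $S$ (not just on $\mathbb{P}^{3}$) and that its action on $\omega_{S}$ is multiplication by $-1$. Once the correct $\sigma$ is identified, steps (3)–(4) are routine: the fixed locus is a transversal intersection of $S$ with coordinate hyperplanes and lines, counted using $\deg F_{4}=4$ and the distinct-roots hypothesis, and the lattice-theoretic conclusion is then forced by Proposition \ref{ASkaraWakaru}(2) since $n=8$ occurs for a unique value of $(r,l)$.

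A secondary point worth isolating is why there are \emph{two} Galois points rather than one on such an $S$: this is already guaranteed by Proposition \ref{equations}(I)-(2), so for the direction proved here (quartic $\Rightarrow$ $K3$ with the order-4 automorphism) nothing extra is needed, but I would remark that the existence of the second Galois point $P_{2}$ is precisely what lets the swap $\tau\colon[X:Y:Z:W]\mapsto[Y:X:Z:W]$ (or the appropriate scaled version) act as an automorphism of $S$, which is the geometric input making the non-purely non-symplectic $\sigma$ available in the first place. This is the structural reason the codimension-1 locus in the $(10,4,8)$ moduli space appears: imposing $F_{4}(Z,W)$ to factor through the extra symmetry is one condition, matching the expected dimension count $\dim=6$ versus $\dim=7$ for the full $(10,4,8)$ family.
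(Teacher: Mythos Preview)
Your proposal never actually lands on a working automorphism, and the candidates you do settle on are wrong. The paper's $\sigma$ is simply $\sigma_{1}\circ\sigma_{2}\colon[X:Y:Z:W]\mapsto[\zeta_{4}X:\zeta_{4}Y:Z:W]$. You essentially had this in hand: you computed that its square $[-X:-Y:Z:W]$ acts trivially on $\omega_{S}$ and called that ``bad'', but as you yourself note two lines later, $\sigma^{\ast}\omega_{S}=-\omega_{S}$ (hence $(\sigma^{2})^{\ast}\omega_{S}=\omega_{S}$) is precisely the definition of non-purely non-symplectic of order 4. Had you gone back after fixing that confusion, the weight bookkeeping gives $\Omega\mapsto\zeta_{4}\cdot\zeta_{4}\cdot1\cdot1\,\Omega=-\Omega$ and $Q\mapsto Q$, so $\sigma^{\ast}\omega_{S}=-\omega_{S}$ as required, and you would have been done.

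Your two alternative candidates both fail. The swap composite $\sigma_{1}\circ\tau\colon[X:Y:Z:W]\mapsto[\zeta_{4}Y:X:Z:W]$ has $(\sigma_{1}\tau)^{2}=[\zeta_{4}X:\zeta_{4}Y:Z:W]$, which is of order 4, so $\sigma_{1}\tau$ has order 8, not 4. The map $[X:Y:Z:W]\mapsto[X:-Y:\zeta_{4}Z:W]$ does not preserve $S$ unless $F_{4}(Z,W)$ happens to be $\zeta_{4}$-homogeneous in $Z$, which is not assumed. No swap $\tau$ is needed anywhere; the structural remark at the end about $\tau$ being ``the geometric input'' is off the mark, as is the dimension count ($\dim\mathcal{M}=1$ inside a 2-dimensional moduli, not $6$ inside $7$).

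Once $\sigma=\sigma_{1}\sigma_{2}$ is identified, the rest is exactly your steps (3)--(4): the fixed locus of the linear action on $\mathbb{P}^{3}$ is the pair of lines $\{X=Y=0\}\amalg\{Z=W=0\}$, and
\[
S^{\sigma}=\{F_{4}(Z,W)=0\}\amalg\{X^{4}+Y^{4}=0\}
\]
is $4+4=8$ isolated points (using that $F_{4}$ has distinct factors). Proposition~\ref{ASkaraWakaru}(2) then forces $(r,l,n)=(10,4,8)$.
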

\begin{proof}
We may assume that $S$ is given by $X^{4}+Y^{4}+F_{4}(Z, W)=0$ and 
each generator of $G_{P_{i}}$ is given by $\sigma_{i}$ by Proposition \ref{equations}.
Put $\sigma:=\sigma_{1} \circ \sigma_{2}$. Then we have
\begin{align*}
S^{\sigma}&=S \cap (\{X=Y=0\}\amalg \{Z=W=0\})\\
&=\{F_{4}(Z, W)=0\}\amalg \{X^{4}+Y^{4}=0\} \\
&=\{ \text{8 points} \}.
\end{align*}
Thus $\sigma$ is of type $(10, 4, 8)$ by Proposition \ref{ASkaraWakaru} (2).
\end{proof}

\begin{rem}\label{dimMnpns}\cite[Remark 1.3]{AS}
The dimension of the moduli space of $K3$ surfaces with 
a non-purely non-symplectic automorphism of order 4 is $l-2$.
\end{rem}

Let $\mathcal{M}$ be the moduli space of smooth quartic surfaces with two outer Galois points.

\begin{prop}
The space $\mathcal{M}$ is a subspace of codimension 1 
in the moduli space of $K3$ surfaces with 
a non-purely non-symplectic automorphism of order 4 and type $(10, 4, 8)$.
\end{prop}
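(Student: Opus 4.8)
The plan is to realize $\mathcal{M}$ as a locally closed subvariety of the moduli space $\mathcal{N}$ of $K3$ surfaces with a non-purely non-symplectic automorphism of order $4$ and type $(10,4,8)$, and then to match dimensions. First I would compute $\dim\mathcal{M}$. By Proposition \ref{equations} every surface in $\mathcal{M}$ is projectively equivalent to one of the form $X^{4}+Y^{4}+F_{4}(Z,W)=0$ with $F_{4}$ a binary quartic with distinct roots, and such a surface depends on $F_{4}$ only through the unordered set of its four roots on $\mathbb{P}^{1}_{[Z:W]}$, since rescaling $F_{4}$ is absorbed by rescaling $X$ and $Y$ by fourth roots of unity. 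Moreover two such surfaces are projectively equivalent exactly when the corresponding four-point configurations are $\mathrm{PGL}_{2}(\mathbb{C})$-equivalent; the residual projective symmetries preserving this normal form are generated by the diagonal fourth-root rescalings of $X$ and $Y$, the transposition $X\leftrightarrow Y$, and $\mathrm{PGL}_{2}(\mathbb{C})$ acting on $[Z:W]$. Hence $\mathcal{M}$ is the moduli space of four distinct unordered points of $\mathbb{P}^{1}$, and $\dim\mathcal{M}=1$.

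On the side of $\mathcal{N}$, the type $(10,4,8)$ gives $(r,l,n)=(10,4,8)$, so $l=4$ and $\dim\mathcal{N}=l-2=2$ by Remark \ref{dimMnpns}. By Proposition \ref{OGalois2ko}, each $S\in\mathcal{M}$ determines the pair $(S,\langle\sigma\rangle)$ with $\sigma=\sigma_{1}\circ\sigma_{2}$ non-purely non-symplectic of order $4$ and type $(10,4,8)$, and this yields a morphism $\Phi:\mathcal{M}\to\mathcal{N}$. To finish it suffices to show $\Phi$ is injective, i.e.\ that the abstract pair $(S,\langle\sigma\rangle)$ recovers the quartic model of $S$ up to projective equivalence. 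For this I would recover inside $\Aut(S)$ the two commuting order-$4$ factors $\sigma_{1},\sigma_{2}$ together with the symplectic automorphism $\tau=\sigma_{1}\circ\sigma_{2}^{-1}$ of order $4$, which satisfies $\tau^{2}=\sigma^{2}$, $\sigma_{1}^{2}=\sigma\circ\tau$ and $\sigma_{2}^{2}=\sigma\circ\tau^{-1}$; then the quartic polarization appears as $H=[S^{\sigma_{1}^{2}}]$, the class of the genus-$3$ fixed curve of the non-symplectic involution $\sigma_{1}^{2}$, and the associated linear system $|S^{\sigma_{1}^{2}}|$ embeds $S$ as a quartic exactly as in the proof of Proposition \ref{mt1-2}. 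Since $\Phi$ is then an injective morphism of the irreducible curve $\mathcal{M}$ into $\mathcal{N}$, it identifies $\mathcal{M}$ with a locally closed subvariety of $\mathcal{N}$ of dimension $1$, hence of codimension $2-1=1$.

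The main obstacle is precisely this injectivity/reconstruction step. The dimension count is elementary, but showing that the isomorphism class of $(S,\langle\sigma\rangle)$ remembers the $\sigma$-invariant degree-$4$ polarization and the factorization $\sigma=\sigma_{1}\circ\sigma_{2}$ requires a lattice-theoretic analysis: one must control the classes of square $4$ in the invariant lattice $L(\sigma)$ and in the coinvariant lattice of the symplectic order-$4$ automorphism $\tau$, together with the action of $\Aut(S,\langle\sigma\rangle)$ on them, in order to rule out a second, inequivalent quartic model.
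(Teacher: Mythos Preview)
Your dimension count is correct and matches the paper's conclusion, though you compute $\dim\mathcal{M}$ differently: the paper simply counts parameters in the family $a_{1}X^{4}+a_{2}Y^{4}+\sum_{i=0}^{4}a_{i+3}Z^{i}W^{4-i}=0$ (seven coefficients) and subtracts the dimension of the subgroup of $GL(4,\mathbb{C})$ commuting with $\sigma_{1}$ and $\sigma_{2}$ (block-diagonal of shape $1+1+2$, dimension $6$), obtaining $7-6=1$. Your reduction to four unordered points on $\mathbb{P}^{1}$ modulo $\mathrm{PGL}_{2}$ is an equivalent and arguably cleaner way to reach the same number. Both sides then use Remark~\ref{dimMnpns} to get $\dim\mathcal{N}=l-2=2$.

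Where you go beyond the paper is in worrying about the injectivity of $\Phi:\mathcal{M}\to\mathcal{N}$. The paper does not address this at all: after the two dimension computations it simply asserts the conclusion. So the ``main obstacle'' you identify---recovering the quartic polarization and the factorization $\sigma=\sigma_{1}\circ\sigma_{2}$ from the abstract pair $(S,\langle\sigma\rangle)$---is a genuine point that the paper leaves implicit. Your proposed lattice-theoretic approach to it is reasonable but not needed to reproduce the paper's argument; if your aim is to match the paper, the bare dimension count suffices.
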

\begin{proof}
Note that points of $\mathcal{M}$ represent 
isomorphism classes of smooth quartic surfaces with two outer Galois points.
If a smooth quartic surface has two outer Galois points then
we consider the following family in $\mathbb{P}^{3}$:
\[ \left\{ a_{1}X^{4}+a_{2}Y^{4}+\sum_{i=0}^{4}a_{i+3}Z^{i}W^{4-i}=0 \right\} \]
by Proposition \ref{equations}.
Since the subgroup
\[ \left\{ \begin{pmatrix} 
b_{1} & 0 & 0 & 0 \\
0 & b_{2} & 0 & 0 \\
0 & 0 & b_{3} & b_{4} \\
0 & 0 & b_{5} & b_{6} \\
  \end{pmatrix} 
\right\} \subset GL(4, \mathbb{C}) \]
commuting with $\sigma_{1}$ and $\sigma_{2}$ has the dimension 6, 
$\dim \mathcal{M}=7-6=1$.

Since the dimension of the moduli space of $K3$ surfaces with 
a non-purely non-symplectic automorphism of order 4 and type $(10, 4, 8)$
is $4-2=2$ by Remark \ref{dimMnpns}, the assertion holds.
\end{proof}

\section{The case of four (maximum) Galois points}
In this section, we treat Main Theorem (\ref{mt3}).
This is the case when a smooth quartic surface has the maximum number of outer Galois points.

\begin{prop}
Let $S$ be a smooth quartic surface with four Galois points.
\begin{enumerate}
\item $S$ is singular, that is, its Picard number is 20.
\item The Gram matrix of the transcendental lattice of $S$ is 
$\begin{pmatrix}8 & 0 \\ 0 & 8\end{pmatrix}$.
\end{enumerate}
\end{prop}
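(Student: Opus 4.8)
The plan is to use the explicit equation from Proposition \ref{equations}, namely that a smooth quartic surface $S$ with four outer Galois points is projectively equivalent to the Fermat quartic $X^{4}+Y^{4}+Z^{4}+W^{4}=0$. Once we know $S$ is the Fermat quartic, both assertions follow from classical facts, but let me organize the argument so that it is self-contained relative to the excerpt.

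First I would record that, by Proposition \ref{equations} (I)-(3), having four outer Galois points forces $S$ to be the Fermat quartic surface up to projective transformation, so it suffices to verify (1) and (2) for this single surface. For part (1), the cleanest route available from the paper itself is to exhibit three commuting order-4 automorphisms: by Proposition \ref{OGalois2ko}, for each pair of Galois points $P_i, P_j$ we get a non-purely non-symplectic automorphism $\sigma_{ij} := \sigma_i\circ\sigma_j$ of order 4 and type $(10,4,8)$, whose invariant lattice $L(\sigma_{ij})$ therefore has rank $r=10$. Now $\sigma_1\sigma_2$, $\sigma_1\sigma_3$, $\sigma_1\sigma_4$ (say) are three such automorphisms generating a group whose action on $H^2(S,\mathbb{Z})$ is rich enough to cut the invariant part down to rank 2: concretely, the Picard lattice $NS(S)$ must contain $\bigcap_{i<j} L(\sigma_{ij})$ together with the hyperplane class, and a short eigenspace computation (the $\sigma_i^\ast$ act diagonally in suitable coordinates) shows the common invariant part of the full group $\langle\sigma_1,\dots,\sigma_4\rangle$ acting on the transcendental lattice has rank $\le 2$; since the transcendental lattice of any $K3$ has rank $\ge 2$ when the Picard number is $20$ and the signature is $(2,20-\rho)$, we conclude $\rho(S)=20$, i.e. $S$ is singular. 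Alternatively one simply cites that the Fermat quartic is the classical example of a singular $K3$ surface.

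For part (2), I would identify the transcendental lattice explicitly. The Fermat quartic surface is well known to be isomorphic to the Kummer-type construction arising from the product $E_i\times E_i$ of the elliptic curve with $j$-invariant $1728$ (equivalently, it has a Shioda–Inose structure over $E_i\times E_i$), and the transcendental lattice of $E_i\times E_i$ is $\begin{pmatrix}2&0\\0&2\end{pmatrix}$; the quartic-surface (degree 4) polarization scales this by $4$, yielding $\begin{pmatrix}8&0\\0&8\end{pmatrix}$. More directly in the spirit of this paper: from Proposition \ref{ASkaraWakaru} (2) the automorphism $\sigma_{12}$ of type $(10,4,8)$ has $l=4$, so its $(-1)$-eigenspace in $H^2(S,\mathbb{C})$ is $4$-dimensional and contains the $2$-dimensional transcendental part; combining the constraints coming from all six automorphisms $\sigma_{ij}$ pins down both the rank ($=2$) and the discriminant form of $T(S)$, and a direct Gram-matrix computation using the hyperplane-section class (self-intersection $4$) on the Fermat quartic gives the diagonal form $\mathrm{diag}(8,8)$.

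The main obstacle is part (2): showing the Gram matrix is exactly $\begin{pmatrix}8&0\\0&8\end{pmatrix}$ rather than some other rank-2 positive-definite lattice of the same discriminant $64$ requires either invoking the known Shioda–Inose / Kummer structure on the Fermat quartic or carrying out an honest lattice-theoretic computation of $T(S)$ from the combined eigenlattice data of the four Galois automorphisms, together with a check (via the classification of even positive-definite binary forms of small discriminant, or via an explicit transcendental cycle computation) that rules out the competitor $\begin{pmatrix}4&0\\0&16\end{pmatrix}$ and $\begin{pmatrix}8&4\\4&8+\varepsilon\end{pmatrix}$-type forms. Part (1) is comparatively routine once the eigenspace bookkeeping for the diagonal actions of $\sigma_1,\dots,\sigma_4$ is written out.
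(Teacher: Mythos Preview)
Your short route---identify $S$ as the Fermat quartic via Proposition~\ref{equations} and then cite the classical facts---is exactly what the paper does: its entire proof is ``$S$ is $X^4+Y^4+Z^4+W^4=0$ by Proposition~\ref{equations}; see \cite[p.~48]{Huybrechts}.'' So if you had stopped at ``Alternatively one simply cites that the Fermat quartic is the classical example\ldots'' you would match the paper perfectly.

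Your more elaborate ``self-contained'' arguments, however, have problems. For part~(2), the claim that ``the quartic-surface (degree~$4$) polarization scales this by~$4$'' is incorrect: the transcendental lattice $T(S)$ is an intrinsic lattice-theoretic invariant of the $K3$ surface and does not depend on any choice of polarization or projective embedding. The factor-of-$4$ discrepancy between $T(E_i\times E_i)\cong\begin{pmatrix}2&0\\0&2\end{pmatrix}$ and $T(S)\cong\begin{pmatrix}8&0\\0&8\end{pmatrix}$ is \emph{not} explained by the degree of the embedding; rather, it reflects that the Fermat quartic is a degree-$4$ cyclic cover of (a blowup of) $\Km(E_i\times E_i)$, and one must trace the effect on transcendental lattices through that covering, not through the polarization. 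Your alternative eigenlattice route for~(2) is, as you yourself note, only a sketch, and the discriminant-form bookkeeping needed to rule out the competitor binary forms is not supplied. For part~(1) the eigenspace argument via the $\sigma_{ij}$ is plausible but also not carried out; the paper simply defers to the literature.
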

\begin{proof}
We may assume that $S$ is defined by the equation 
$X^{4}+Y^{4}+Z^{4}+W^{4}=0$ by Proposition \ref{equations}.
Thus the claim holds.
See also \cite[p.48]{Huybrechts}.
\end{proof}

\begin{prop}(\cite[Theorem 4]{SI})
Let $\mathcal{Q}$ be the set of $2\times 2$ positive definite even integral 
matrices:
\[ \mathcal{Q}=\left\{ \begin{pmatrix} 2a & b \\ b & 2c \end{pmatrix} \mid 
a,b,c \in \mathbb{Z}, a, c >0, b^{2}-4ac<0 \right\}.\]
There exists a bijective correspondence from the set of isomorphism classes of
singular $K3$ surfaces onto $\mathcal{Q}/SL_{2}(\mathbb{Z})$
which is given by a singular $K3$ surface maps to the Gram matrix of its transcendental lattice.
\end{prop}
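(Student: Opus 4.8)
The plan is to deduce the statement from the Torelli theorem and the surjectivity of the period map for $K3$ surfaces, together with Nikulin's uniqueness results for even lattices and their primitive embeddings; this is the argument of \cite[Theorem 4]{SI}, so I only outline it. First I would check that the assignment is well defined. If $S$ is a singular $K3$ surface then $T_S$ is the orthogonal complement of $\mathrm{NS}(S)$ in $H^{2}(S,\mathbb{Z})$; it has rank $22-20=2$, it is even because $H^{2}(S,\mathbb{Z})$ is even, and it is positive definite by the Hodge index theorem (the signatures of $H^{2}(S,\mathbb{Z})$ and $\mathrm{NS}(S)$ are $(3,19)$ and $(1,19)$). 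The real plane $T_S\otimes\mathbb{R}$ carries the canonical orientation given by $(\mathrm{Re}(\omega_S),\mathrm{Im}(\omega_S))$, which is independent of the scaling of $\omega_S$; choosing a positively oriented basis of $T_S$ then produces a Gram matrix in $\mathcal{Q}$, well defined up to $SL_{2}(\mathbb{Z})$, and isomorphic surfaces evidently give the same class.

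For injectivity, the key observation is that a rank-$2$ positive definite lattice $T$ with a fixed orientation carries a unique Hodge structure of $K3$ type: such a structure is an isotropic line $\mathbb{C}\omega\subset T\otimes\mathbb{C}$ with $\omega\bar\omega>0$, and of the two isotropic lines of the (rank-$2$) form the orientation of $T\otimes\mathbb{R}$ singles out the one for which $(\mathrm{Re}(\cdot),\mathrm{Im}(\cdot))$ is positively oriented. Hence any oriented isometry $T_{S_1}\to T_{S_2}$ is automatically a Hodge isometry. Now $\mathrm{NS}(S_i)$ is an even lattice of signature $(1,19)$ with discriminant form $-q_{T_{S_i}}$, hence unique in its genus by Nikulin (rank $20\ge 2+\ell(A_{\mathrm{NS}(S_i)})$), and the primitive embedding $T_{S_i}\hookrightarrow\Lambda_{K3}$ into the $K3$ lattice is unique up to isometry; therefore the oriented Hodge isometry of transcendental lattices extends to a Hodge isometry $H^{2}(S_1,\mathbb{Z})\to H^{2}(S_2,\mathbb{Z})$, which may be made effective after composing with an element of the Weyl group (and possibly $-1$). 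The strong Torelli theorem then yields $S_1\cong S_2$.

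For surjectivity, start from $Q=\begin{pmatrix}2a & b\\ b & 2c\end{pmatrix}\in\mathcal{Q}$, let $T$ be the lattice with oriented basis $e_1,e_2$ and this Gram matrix, put $\tau=\frac{-b+\sqrt{b^{2}-4ac}}{2c}$ in the upper half plane, and set $\omega=e_1+\tau e_2$, so that $\omega^{2}=0$ and $\omega\bar\omega>0$. By Nikulin there is a primitive embedding $T\hookrightarrow\Lambda_{K3}$; let $N=T^{\perp}$, of signature $(1,19)$, so that $N\subset\omega^{\perp}$. By surjectivity of the period map there is a marked $K3$ surface $(S,\varphi)$ with $\varphi$ sending the period line to $\mathbb{C}\omega$; then all classes in $\varphi^{-1}(N)$ are of type $(1,1)$, so $\mathrm{NS}(S)\supseteq\varphi^{-1}(N)$, which forces the Picard number to be $20$. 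Thus $S$ is singular, and since $\mathbb{C}\omega\cap(T\otimes\mathbb{R})=0$ the only type-$(1,1)$ classes are those in $\varphi^{-1}(N)$, whence $\mathrm{NS}(S)=\varphi^{-1}(N)$ and $T_S=\varphi^{-1}(T)\cong T$. A short check that $(\mathrm{Re}(\omega),\mathrm{Im}(\omega))$ is positively oriented in the basis $e_1,e_2$ (replace $\tau$ by $\bar\tau$ otherwise) shows that $S$ maps to $[Q]$, and combined with injectivity this gives the bijection.

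The main obstacle is the lattice-theoretic rigidity used in the injectivity step — proving that $\mathrm{NS}(S)$ and the primitive embedding $T_S\hookrightarrow\Lambda_{K3}$ are unique, so that an isometry of the small rank-$2$ lattice $T_S$ propagates to the whole $K3$ lattice; the transcendental analysis (Torelli, surjectivity of the period map) is used as a black box, and the orientation bookkeeping, though essential in order to obtain $SL_{2}(\mathbb{Z})$ rather than a larger group, is routine.
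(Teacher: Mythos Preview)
The paper does not prove this proposition at all; it is quoted as \cite[Theorem~4]{SI} and used only as a black box to conclude that the Fermat quartic is determined by its transcendental lattice. Your outline therefore supplies strictly more than the paper does, and there is nothing in the paper to compare it against.

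Your argument is a correct modern proof, but it is not quite ``the argument of \cite{SI}'' as you assert. Shioda and Inose wrote in 1977, before Nikulin's lattice-embedding theorems and before the surjectivity of the period map for $K3$ surfaces were available in the form you use. Their injectivity step does go through the Torelli theorem, but their surjectivity is constructive: given $Q\in\mathcal{Q}$ they produce two CM elliptic curves $E_{1},E_{2}$, form the Kummer surface $\Km(E_{1}\times E_{2})$, and then pass to a further double cover (the Shioda--Inose structure) to realise the prescribed transcendental lattice. Your route via Nikulin's uniqueness results plus surjectivity of the period map is the standard post-1980 streamlining; it is shorter and avoids any explicit geometry, whereas the original construction actually exhibits the surfaces. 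For the purposes of this paper either version is immaterial, since the result is only cited.
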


The smooth quartic surface with four Galois points is 
characterized as the singular $K3$ surface whose 
transcendental lattice is $\begin{pmatrix}8 & 0 \\ 0 & 8\end{pmatrix}$.

\end{document}